\documentclass[a4paper, 11pt]{article}

\usepackage[utf8]{inputenc}
\usepackage{hyperref}
\usepackage{amsthm}
\usepackage{amssymb}
\usepackage{newtxtext}
\usepackage[cochineal]{newtxmath}
\usepackage{microtype}
\usepackage[margin=3cm]{geometry}
\usepackage{amsmath}
\usepackage{graphicx}
\usepackage{semantic}
\usepackage{ebproof}
\usepackage{xcolor}
\usepackage{url}

\usepackage[english]{babel}

\begin{filecontents}{mybooknumeric.bbx}
\ProvidesFile{mybooknumeric.bbx}
\RequireBibliographyStyle{standard}
\RequireBibliographyStyle{numeric}
\DeclareBibliographyOption[boolean]{dashed}[true]{%
  \ifstrequal{#1}{true}
    {\ExecuteBibliographyOptions{pagetracker}%
     \renewbibmacro*{bbx:savehash}{\savefield{fullhash}{\bbx@lasthash}}}
    {\renewbibmacro*{bbx:savehash}{}}}
\newbibmacro*{bbx:savehash}{%
  \savefield{fullhash}{\bbx@lasthash}}

\renewbibmacro*{author}{%
  \ifboolexpr{
    test \ifuseauthor
    and
    not test {\ifnameundef{author}}
  }
  {\usebibmacro{bbx:dashcheck}
    {\bibnamedash}
    {\printnames{author}%
      \setunit{\addcomma\space}%
      \usebibmacro{bbx:savehash}}%
    \usebibmacro{authorstrg}}
  {\global\undef\bbx@lasthash}}

\newbibmacro*{bbx:dashcheck}[2]{%
  \ifboolexpr{
    test {\iffieldequals{fullhash}{\bbx@lasthash}}
    and
    not test \iffirstonpage
  }
  {#1}
  {#2}}

\end{filecontents}

\usepackage[language=australian, bibstyle=mybooknumeric, citestyle=numeric]{biblatex}
\addbibresource{refs.bib}

\usepackage{enumerate}
\usepackage{verbatim}

\newcommand{\C}{\mathbb{C}}

\newtheorem{theorem}{Theorem}[section]

\theoremstyle{definition}
\newtheorem{definition}[theorem]{Definition}
\theoremstyle{definition}
\newtheorem{remark}[theorem]{Remark}
\theoremstyle{definition}
\newtheorem{example}[theorem]{Example}

\title{Gröbner bases and final polynomials}
\author{Peter Lundgaard \thanks{Dept. of Mathematics, Aarhus University. \href{mailto:lundgaardpeter@gmail.com}{lundgaardpeter@gmail.com}} \hspace{3cm}
Andreas Bøgh Poulsen \thanks{Dept. of Mathematics, Aarhus University. \href{mailto:bpou@outlook.dk}{bpou@outlook.dk}}}
\date{}

\begin{document}
\maketitle
\begin{abstract}
    In \cite{MR0997071} Sturmfels linked the  Hilbert Nullstellensatz to 
    Gr\"obner bases through final polynomials. In (loc.\ cit.) it was claimed
    that  final polynomials always appear in a lexicographic Gr\"obner basis
    of a certain ideal. In this paper, we give a 
    counterexample to this claim. We also show how the introduction of an
    extra variable restores the claim in a deformed setup, which we
    call extended final polynomials.
\end{abstract}

\section{Final polynomials}

Suppose that 
$F = \{f_1, \dots, f_k\} \subset
K[x_1, \dots, x_n]$, where $K$ is a field.
We recall the
definition of final polynomials from \cite[\S 1]{MR0997071}. 

\begin{definition}
    A polynomial $p \in K[x_1, \dots, x_n, y_1, \dots, y_k]$ is called \emph{final} for $F$ if
    \begin{enumerate}
        \item $p(x_1, \dots, x_n, f_1, f_2, \dots, f_k) = 0$ and
        \item $p(x_1, \dots, x_n, 0, 0, \dots, 0) \in K \setminus \{0\}$.
    \end{enumerate}
\end{definition}

If $K$ is algebraically
closed, then the (weak) Hilbert Nullstellensatz says that $V(F) := \{v\in K^n \mid f_1(v) = \cdots = f_k(v) = 0\}=\emptyset$ if and only if
the ideal, $\langle F \rangle$, generated by $F$ contains $1$.

If there exists a final polynomial for $F$, then $V(F)  = \emptyset$. On the other hand if 
$1\in \langle F \rangle$ and $\lambda_1 f_1 + \cdots + \lambda_k f_k = 1$ for $\lambda_1, \dots, \lambda_k\in K[x_1, \dots, x_n]$, then
$p = 1 - \lambda_1 y_1 - \cdots \lambda_k y_k$ is a final polynomial for 
$F$.


\section{Bernd's conjecture}

In \cite[Theorem 3.1]{MR0997071} and \cite[Theorem 6.2]{MR1122641}, Sturmfels claimed the result below without proof.

\begin{theorem}\label{FinPol : 3}
    Let $K$ be an algebraically closed field and $G$ a Gr\"obner basis of $I = \langle y_1 - f_1, y_2 - f_2, \dots, y_k - f_k \rangle \subset K[x_1, \dots, x_n, y_1, \dots, y_k]$ for the lexicographic order $x_1 > x_2 > \dots > x_n > y_1 > y_2 > \dots > y_k$. If $V(F) = \emptyset$, then $G$ contains a final polynomial for $F$.
\end{theorem}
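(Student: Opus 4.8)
The plan is to reduce the statement to a fact about Gröbner bases of the unit ideal, and then to transport the basis $G$ across the substitution $y_1 = \dots = y_k = 0$.

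First note that every element of $G$ automatically satisfies the first half of the definition of a final polynomial: if $g \in G \subseteq I = \langle y_1 - f_1, \dots, y_k - f_k\rangle$, then writing $g = \sum_i q_i(y_i - f_i)$ and substituting $y_i \mapsto f_i$ yields $g(x_1, \dots, x_n, f_1, \dots, f_k) = 0$. Hence $g$ is a final polynomial for $F$ precisely when $g(x_1, \dots, x_n, 0, \dots, 0) \in K \setminus \{0\}$, and the theorem is equivalent to the assertion that some $g \in G$ has this property. Next, let $\psi \colon K[x_1, \dots, x_n, y_1, \dots, y_k] \to K[x_1, \dots, x_n]$ be the surjection with $\psi(y_i) = 0$ and $\psi(x_j) = x_j$. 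Since $I = \langle G \rangle$ and $\psi$ is onto, $\psi(I) = \langle \psi(g) : g \in G \rangle$; on the other hand $\psi(I) = \langle \psi(y_i - f_i) : i \rangle = \langle f_1, \dots, f_k \rangle = \langle F \rangle$. When $V(F) = \emptyset$ the weak Nullstellensatz gives $1 \in \langle F \rangle$, so the polynomials $\psi(g)$, $g \in G$, generate the unit ideal of $K[x_1, \dots, x_n]$.

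The crux of the argument would be to show that $\{\,\psi(g) : g \in G\,\}$ is not merely a generating set but actually a Gröbner basis of $\langle F \rangle$ for the lexicographic order $x_1 > \dots > x_n$. The hope rests on the $y_i$ being the smallest variables, so that killing them ought to be harmless for leading-term bookkeeping, in analogy with the elimination theorem; concretely I would try to verify Buchberger's criterion for $\{\psi(g)\}$ by lifting a standard representation of each $S$-polynomial to a reduction against $G$ in $K[x,y]$ and then applying $\psi$. Granting this, the proof ends in one line: any Gröbner basis of the unit ideal $K[x_1, \dots, x_n]$ must contain an element with leading term $1$ — otherwise $1$ could not reduce to $0$ — and such an element is a nonzero constant $\psi(g) = c$ for some $g \in G$; by the first step this $g$ is final for $F$.

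The hard part is precisely this last step. Substitution $y \mapsto 0$ need not preserve the Gröbner property: the leading term of a $g \in G$ that is divisible by some $y_i$ can collapse unpredictably under $\psi$, so the lifted $S$-polynomial reductions need not descend, and $\{\psi(g)\}$ can fail to be a Gröbner basis of $\langle F\rangle$ even when it generates the unit ideal. I expect any attempted proof either succeeds or breaks down exactly here; and in view of the counterexample announced in the abstract, it breaks down — there is an $F$ with $V(F) = \emptyset$ for which $G$ contains no element whose $y \mapsto 0$ image is a nonzero constant.
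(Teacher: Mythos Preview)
Your proposal is not a proof, and that is exactly right: the paper does not prove Theorem~\ref{FinPol : 3} either. The theorem is stated as a claim of Sturmfels which the authors then \emph{refute} by an explicit counterexample (Example~\ref{FinalPolEx : 1}). There is no ``paper's proof'' to compare against.

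Your analysis of \emph{why} the natural argument fails is accurate and matches the obstruction the counterexample exhibits. You correctly isolate the crux --- showing that $\{\psi(g) : g \in G\}$ is a Gr\"obner basis of $\langle F\rangle$ rather than merely a generating set --- and correctly identify the failure mechanism: when the leading monomial of some $g\in G$ is divisible by a $y_i$, the specialization $y\mapsto 0$ can collapse it unpredictably, destroying the Gr\"obner property. (Note this is \emph{not} elimination: with lex and $x>y$, intersecting with $K[x]$ would behave well, but quotienting by the $y_i$ is a different operation.) This is precisely what happens in the paper's example: with $F=\{x_2+x_3,\; x_2x_3,\; x_1x_3+1\}$, the reduced lex Gr\"obner basis of $I$ contains $g=x_1y_2+x_3y_3-x_3-y_1y_3+y_1$, whose leading monomial $x_1y_2$ disappears under $\psi$, leaving $\psi(g)=-x_3$; the images $\{-x_3,\,x_3^2,\,x_2+x_3,\,x_1x_3+1\}$ generate the unit ideal but are not a Gr\"obner basis of it, and none is a nonzero constant. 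So your self-diagnosed gap is the gap, and the theorem is false as stated.
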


In private communication \cite{email}, Sturmfels suggests a proof using specialization of Gr\"obner bases, but is open to counterexamples to Theorem \ref{FinPol : 3}, which he coined Bernd's conjecture (loc.\ cit.). After generating several computer examples supporting Bernd's conjecture, we finally found the counterexample below.

\begin{example}\label{FinalPolEx : 1}
    Let $F = \{f_1, f_2, f_3\} \subset \C[x_1, x_2, x_3]$ where $f_1 = x_2 + x_3$, $f_2 = x_2 x_3$ and $f_3 = x_1 x_3 + 1$. The reduced Gröbner basis of $\langle F \rangle$ is $\{1\}$, so $V(F)=\emptyset$. However, $I = \langle y_1 - f_1, y_2 - f_2, y_3 - f_3 \rangle \subset \C[x_1, x_2, x_3, y_1, y_2, y_3]$ has the reduced Gröbner basis (w.r.t.\ the lexicographic order $x_1 > x_2 > x_3 > y_1 > y_2 > y_3$)
    \begin{gather*}
    G = \{ x_1 y_2 + x_3 y_3 - x_3 - y_1 y_3 + y_1, x_3^2 - x_3 y_1 + y_2, x_2 + x_3 - y_1, x_1 x_3 - y_3 + 1\}.
    \end{gather*}
    Putting $y_1 = y_2 = y_3 = 0$, $G$ specializes to 
    \[ \{ -x_3, x_3^2, x_2 + x_3, x_1 x_3 + 1\}\]
    which does not contain a constant. Therefore $G$ does not contain a final polynomial. Even simpler
    counterexamples exist if one relaxes the condition that $G$ is reduced.
\end{example}

\section{Extended final polynomials}

In this section, we modify the statement of Theorem~\ref{FinPol : 3} to produce a Gröbner basis for a certain ideal with respect to a monomial order giving rise to a final polynomial.
In a sense we add a variable to deform the original statement.
We introduce extended final polynomials.

\begin{definition}\label{ScaledFinPolDef : 1}
    A polynomial $p\in K[z, x_1, \dots, x_n,y_1, \dots, y_k]$ is called an \emph{extended final polynomial} for $F$ if 
    \begin{enumerate}
        \item $p(z, x_1, \dots, x_n, zf_1,zf_2,\dots, zf_k) = 0$ and
        \item $p(z, x_1, \dots, x_n, 0,\dots, 0) = c z \text{ for some $c \in K \setminus \{0\}$}$.
    \end{enumerate}
\end{definition}
\begin{remark}\label{FinPolScalFinPolConnection : 1}
    If $p$ is an extended final polynomial for $F$, then $p(1, x_1, \dots, x_n, y_1, \dots, y_k)$ is a final polynomial for $F$. If $1 = \lambda_1 f_1 + \cdots + \lambda_k f_k$ for $\lambda_i\in K[x_1, \dots, x_n]$, then
    $$
    p = z - \lambda_1 y_1 - \cdots - \lambda_k y_k
    $$
    is an extended final polynomial for $F$.
    If $p$ is an extended final polynomial, then
$$
p\in \widehat{I} := \left\langle y_1 - zf_1,y_2 - zf_2, \dots, y_k - zf_k \right\rangle \subset K[z, x_1, \dots, x_n, y_1, \dots, y_k].
$$
\end{remark}


\begin{theorem}\label{ScaledFinPolTheorem}
    Let $K$ be an algebraically closed field and $G$ a Gröbner basis of $\widehat{I}$ for a monomial order satisfying that
    monomials in $K[z, x_1, \dots, x_n, y_1, \dots, y_k]\setminus K[x_1, \dots, x_n, y_1, \dots, y_k]$ are greater than monomials in $K[x_1, \dots, x_n, y_1, \dots, y_k]$. If $V(F) = \emptyset$, then $G$ contains an extended final polynomial for $F$.
\end{theorem}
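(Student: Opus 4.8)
The plan is to use the hypothesis on the monomial order to force one particular element of $\widehat I$ to have leading monomial exactly $z$, and then to show that any element of the given Gröbner basis with this leading monomial is automatically an extended final polynomial for $F$.

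First I would fix notation and reformulate the target condition. Let $\phi\colon K[z,x_1,\dots,x_n,y_1,\dots,y_k]\to K[z,x_1,\dots,x_n]$ be the $K[z,x_1,\dots,x_n]$-algebra homomorphism with $\phi(y_i)=zf_i$. Substituting $y_i=(y_i-zf_i)+zf_i$ one variable at a time shows $g-g(z,x_1,\dots,x_n,zf_1,\dots,zf_k)\in\widehat I$ for every $g$, whence $\widehat I=\ker\phi$; since $\phi$ is surjective, $\widehat I$ is a proper ideal. Condition~(1) of Definition~\ref{ScaledFinPolDef : 1} for a polynomial $p$ says precisely $p\in\ker\phi=\widehat I$, so $p$ is an extended final polynomial for $F$ if and only if $p\in\widehat I$ and $p(z,x_1,\dots,x_n,0,\dots,0)=cz$ for some $c\in K\setminus\{0\}$. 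Now, since $V(F)=\emptyset$ and $K$ is algebraically closed, the Nullstellensatz gives $\lambda_1,\dots,\lambda_k\in K[x_1,\dots,x_n]$ with $\lambda_1f_1+\cdots+\lambda_kf_k=1$, and then, as already noted in Remark~\ref{FinPolScalFinPolConnection : 1},
\[
p_0:=z-\lambda_1y_1-\cdots-\lambda_ky_k=-\sum_{i=1}^{k}\lambda_i(y_i-zf_i)\in\widehat I .
\]

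Next I would read off leading monomials. Every monomial of $p_0$ other than $z$ occurs in some product $\lambda_iy_i$, hence lies in $K[x_1,\dots,x_n,y_1,\dots,y_k]$, and so is smaller than $z$ by hypothesis; therefore the leading monomial of $p_0$ is $z$ and $z\in\inn(\widehat I)$. Since $G$ is a Gröbner basis of $\widehat I$, some $g\in G$ has leading monomial dividing $z$, i.e.\ equal to $1$ or $z$; the value $1$ is impossible because $\widehat I$ is proper, so $g$ has leading monomial $z$. Here I would invoke the elementary fact that in any monomial order $m\neq 1$ implies $zm>z$, so the only monomial involving $z$ that is $\le z$ is $z$ itself; hence every monomial of $g$ below its leading monomial lies in $K[x_1,\dots,x_n,y_1,\dots,y_k]$, and we may write $g=cz+h$ with $c\in K\setminus\{0\}$ and $h\in K[x_1,\dots,x_n,y_1,\dots,y_k]$.

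It then remains to check the specialization condition $h(x_1,\dots,x_n,0,\dots,0)=0$. From $g\in\widehat I=\ker\phi$ we get $0=\phi(g)=cz+h(x_1,\dots,x_n,zf_1,\dots,zf_k)$ in $K[z,x_1,\dots,x_n]$; writing $h=\sum_\alpha h_\alpha(x_1,\dots,x_n)\,y^\alpha$, the summand $h_\alpha y^\alpha$ contributes to the coefficient of $z^{|\alpha|}$, so the coefficient of $z^{0}$ in $\phi(g)$ equals $h(x_1,\dots,x_n,0,\dots,0)$ (and $cz$ contributes nothing to $z^0$), forcing $h(x_1,\dots,x_n,0,\dots,0)=0$. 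Hence $g(z,x_1,\dots,x_n,0,\dots,0)=cz$ with $c\neq 0$, and combined with $g\in\widehat I$ the reformulation above shows $g\in G$ is an extended final polynomial for $F$. The identification $\widehat I=\ker\phi$, the production of $p_0$, and the bookkeeping with powers of $z$ are all routine; I expect the only step genuinely using the hypothesis on the order — and hence the one to handle carefully — to be the passage from ``$G$ has an element with leading monomial $z$'' to the rigid normal form $g=cz+h$ with $h$ free of $z$.
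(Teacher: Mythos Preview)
Your proof is correct and follows essentially the same route as the paper: produce via the Nullstellensatz an element of $\widehat{I}$ with leading monomial $z$, locate a $g\in G$ whose leading monomial divides (hence equals) $z$, and verify that this $g$ is an extended final polynomial. The only difference is in the final verification: the paper writes $g=\sum_i\mu_i(y_i-zf_i)$ and separates off the $y$-free part $s_1=z\bigl(\sum_i\mu'_if_i\bigr)$ to conclude $g(z,x,0,\dots,0)=s_1$ equals the leading term, whereas you identify $\widehat{I}=\ker\phi$ and read off the $z^0$-coefficient of $\phi(g)=0$ after first isolating the normal form $g=cz+h$ with $h$ free of~$z$; both arguments ultimately rest on the same observation that $z$ is the unique monomial of $g$ involving $z$.
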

\begin{proof}
    Suppose $V(F) = \emptyset$. Then by the Nullstellensatz,
    \[1 = \sum_{i=1}^k \lambda_i f_i\]
    for suitable $\lambda_1, \dots, \lambda_k \in K[x_1, \dots, x_n]$. This gives the extended final polynomial 
    $$
    p= z - \sum_{i=1}^k \lambda_i y_i\in \widehat{I}.
    $$
    Since $G$ is a Grobner basis, there 
    is some $g\in G$ with leading monomial dividing
    $z$. We may assume that the leading monomial of $g$ is $z$, as $1\not\in \widehat{I}$. Since $g\in \widehat{I}$,
    \begin{align*}
        g &= \sum_{i=1}^k \mu_i (y_i - z f_i ) \\
        &= \underbrace{z\left(\sum_{i=1}^k \mu'_i f_i \right)}_{s_1}-\underbrace{\sum_{i=1}^k \mu''_i y_i}_{s_2}
    \end{align*}
    for suitable $\mu'_i \in K[z, x_1, \dots, x_n]$ and $\mu_i, \mu''_i\in K[z, x_1, \dots, x_n, y_1, \dots, y_k]$.
    Every monomial in $s_2$ lies in $\langle y_1, \dots, y_k\rangle$ and
    there is no cancellation of terms between $s_1$ and $s_2$. 
    Therefore $\sum_{i=1}^k\mu'_i f_i =1$, $g(z, x_1, \dots, x_n, 0, \dots, 0) = z$ and $g$ is seen to be an extended final polynomial for $F$.
\end{proof}


\begin{example}\label{ScaledFinPolEx : 1}
    Let $f_1, f_2, f_3$ be given as in Example~\ref{FinalPolEx : 1}. Computing the reduced Gröbner basis of $\widehat{I} = \langle y_1 - z f_1, y_2 - z f_2, y_3 - z f_3 \rangle$ for the lexicographic order $z > x_1 > x_2 > x_3 > y_1 > y_2 > y_3$, we confirm that it contains the extended final polynomial $z + x_1^2 y_2 - x_1 x_2 y_3 + x_1 y_1 - y_3$.
\end{example}

\section*{Acknowledgments}

This paper grew out of our master's theses \cite{PETER} and \cite{ANDREAS} at Aarhus University.
We are grateful to our advisor Niels Lauritzen for guidance and for helping us prepare this article. 

\printbibliography{}
\end{document}